\newtheorem{thm}{Theorem}[section]
\newtheorem{lem}[thm]{Lemma}
\theoremstyle{definition}
\newtheorem{rem}[thm]{Remark}
\newtheorem*{acknow}{Acknowledgments}
\newcommand{\en}{\mathbb{N}}
\newcommand{\diam}{\mathrm{diam}}
\newcommand{\distortion}{\mathrm{distortion}}
\newcommand{\lip}{\mathrm{Lip}}
\begin{document}

\title[Quantitative coarse embeddings]{Quantitative coarse embeddings of quasi-Banach spaces into a Hilbert space}

\author{Michal Kraus}

\address{Institute of Mathematics AS CR, \v{Z}itn\'{a} 25, 115 67\ \ Praha 1, Czech Republic}

\email{kraus@math.cas.cz}

\subjclass[2010]{Primary 46B20; Secondary 46A16, 51F99, 46B85}

\keywords{}

\begin{abstract}
We study how well a quasi-Banach space can be coarsely embedded into a Hilbert space. Given any quasi-Banach space $X$ which coarsely embeds into a Hilbert space, we compute its Hilbert space compression exponent. We also show that the Hilbert space compression exponent of $X$ is equal to the supremum of the amounts of snowflakings of $X$ which admit a bi-Lipschitz embedding into a Hilbert space.
\end{abstract}

\thanks{This work was supported by the grant GA\v{C}R 201/11/0345.}


\maketitle

\section{Introduction}

Let $(M,d_M)$ and $(N,d_N)$ be metric spaces and let $T\colon M\to N$ be a mapping. Then $T$ is called a \emph{coarse embedding} if there are nondecreasing functions $\rho_1,\rho_2\colon[0,\infty)\to[0,\infty)$ such that $\lim_{t\to\infty}\rho_1(t)=\infty$ and
$$\rho_1(d_M(x,y))\leq d_N(T(x),T(y))\leq\rho_2(d_M(x,y))\text{ for all }x,y\in M.$$
We say that $M$ \emph{coarsely embeds} into $N$ if there is a coarse embedding of $M$ into $N$. The reader should be warned that what we call a coarse embedding is called a uniform embedding by some authors. We use the term coarse embedding because in the nonlinear geometry of Banach spaces the term uniform embedding is used for a uniformly continuous injective mapping whose inverse is also uniformly continuous.

Randrianarivony \cite[Theorem 1]{ra} gave a characterization of those quasi-Banach spaces which coarsely embed into a Hilbert space. More precisely, she proved that a quasi-Banach space coarsely embeds into a Hilbert space if and only if it is linearly isomorphic to a subspace of $L_0(\mu)$ for some probability space $(\Omega,\mathcal{B},\mu)$ ($L_0(\mu)$ is the space of all equivalence classes of real measurable functions on $(\Omega,\mathcal{B},\mu)$ with the topology of convergence in probability). In this note, we are interested in how well a quasi-Banach space can be coarsely embedded into a Hilbert space. To measure it, we will use the following notion introduced by Guentner and Kaminker \cite[Definition 2.2]{gk}.

Suppose again that $(M,d_M)$ and $(N,d_N)$ are metric spaces, with $M$ unbounded. Recall that a mapping $T\colon M\to N$ is \emph{large-scale Lipschitz} if there is $A>0$ and $B\geq0$ such that $d_N(T(x),T(y))\leq Ad_M(x,y)+B$ for all $x,y\in M$. The \emph{compression exponent} of $M$ in $N$, denoted by $\alpha_N(M)$, is defined to be the supremum of all $\alpha\geq0$ for which there is a large-scale Lipschitz mapping $T\colon M\to N$ and constants $C,t>0$ such that $d_N(T(x),T(y))\geq Cd_M(x,y)^\alpha$ if $d_M(x,y)\geq t$ (with the understanding that $\alpha_N(M)=0$ if there is no such $\alpha$). It is clear that $\alpha_N(M)\leq1$ (since $M$ is unbounded) and that if $\alpha_N(M)>0$, then $M$ coarsely embeds into $N$. The closer $\alpha_N(M)$ is to one, the ``better'' we can coarsely embed $M$ into $N$. The \emph{Hilbert space compression exponent} of $M$, denoted by $\alpha(M)$, is the supremum of all $\alpha\geq0$ for which there is a Hilbert space $H$, a large-scale Lipschitz mapping $T\colon M\to H$ and constants $C,t>0$ such that $\|T(x)-T(y)\|_H\geq Cd_M(x,y)^\alpha$ if $d_M(x,y)\geq t$. Equivalently,
$$\alpha(M)=\sup_{H\text{ is a Hilbert space}}\alpha_H(M).$$
Analogous remarks to those on $\alpha_N(M)$ apply to $\alpha(M)$ as well.

Our method of establishing a lower estimate for the Hilbert space compression exponent of a quasi-Banach space actually gives a stronger information. We will use one more type of parameter which will capture this additional information.

Let $(M,d_M)$ and $(N,d_N)$ be metric spaces. Recall that a mapping $T\colon M\to N$ is called a \emph{bi-Lipschitz embedding} if there are constants $A,B>0$ such that
\begin{equation}\label{def_bi-Lip}
Ad_M(x,y)\leq d_N(T(x),T(y))\leq Bd_M(x,y)\text{ for all }x,y\in M.
\end{equation}
Recall also that if $0<\alpha<1$, then $d_M^\alpha$ is also a metric on $M$ (the space $(M,d_M^\alpha)$ is sometimes called the $\alpha$-\emph{snowflaked version} of $(M,d_M)$). We denote by $s_N(M)$ the supremum of all $0<\alpha\leq1$ for which the space $(M,d_M^\alpha)$ admits a bi-Lipschitz embedding into $(N,d_N)$. Let further $s(M)$ be the supremum of all $0<\alpha\leq1$ for which the space $(M,d_M^\alpha)$ admits a bi-Lipschitz embedding into a Hilbert space. It is clear that if $M$ is unbounded, then $0\leq s_N(M)\leq\alpha_N(M)\leq1$ and $0\leq s(M)\leq\alpha(M)\leq1$. The parameter $s_N(M)$ was introduced and studied by Albiac and Baudier \cite{ab} in the case when $M$ and $N$ were $\ell_p$-spaces.

We use symbols $\alpha_N(M)$, $\alpha(M)$, $s_N(M)$ and $s(M)$ when the metrics on $M$ and $N$ are clear from the context, otherwise we write for example $\alpha_N(M,d_M)$.

The values of $s(X)$ and $\alpha(X)$ are known if $X$ is a space $\ell_p$ or $L_p(0,1)$ for $0<p<\infty$. Let us recall the results. Recall first that if $0<p<1$, then the canonical metric on $\ell_p$ is defined by $d_p(x,y)=\sum_{i=1}^\infty|x_i-y_i|^p$, where $x=(x_i),y=(y_i)$, and similarly the canonical metric on $L_p(0,1)$ is defined by $d_p(f,g)=\int_0^1|f(t)-g(t)|^p\mathrm{d}t$. Baudier \cite[Corollaries 2.23 and 2.19]{ba} proved that if $0<p<q<\infty$ and $q\geq1$, then 
\begin{equation}\label{s_lq_lp_and_alpha_lq_lp}
s_{\ell_q}(\ell_p)=\alpha_{\ell_q}(\ell_p)=\frac{\max\{p,1\}}{q}
\end{equation}
(the case $q=1$ was already proved in \cite[Proposition 4.1(ii)]{al}). It follows that if $0<p\leq2$, then 
\begin{equation}\label{s_and_alpha_little_lp}
s(\ell_p)=\alpha(\ell_p)=\frac{\max\{p,1\}}{2}.
\end{equation}
If $p>2$, then $\ell_p$ does not coarsely embed into a Hilbert space (this was first proved in \cite{jr}), hence $s(\ell_p)=\alpha(\ell_p)=0$. 

It also follows from \cite[after Corollary 2.19]{ba}, \cite[Remark 5.10]{mn} and \cite[Proposition 6.5]{al} that if $0<p\leq2$, $q\geq1$ and $p<q$, then 
$$s_{L_q(0,1)}(L_p(0,1))=\alpha_{L_q(0,1)}(L_p(0,1))=\frac{\max\{p,1\}}{\min\{q,2\}}.$$ 
Hence if $0<p\leq2$, then 
\begin{equation}\label{s_and_alpha_cap_Lp}
s(L_p(0,1))=\alpha(L_p(0,1))=\frac{\max\{p,1\}}{2}.
\end{equation}
If $p>2$, then $s(L_p(0,1))=\alpha(L_p(0,1))=0$ since $L_p(0,1)$ does not coarsely embed into a Hilbert space (because it contains an isometric copy of $\ell_p$).

Let us mention that unlike the case of the spaces $\ell_p$ described in \eqref{s_lq_lp_and_alpha_lq_lp}, the precise values of $s_{L_q(0,1)}(L_p(0,1))$ and $\alpha_{L_q(0,1)}(L_p(0,1))$ are not known if $2<p<q$. However, some estimates are known. If $2<p<q$, a construction due to Mendel and Naor \cite[Remark 5.10]{mn} shows that $\alpha_{L_q(0,1)}(L_p(0,1))\geq s_{L_q(0,1)}(L_p(0,1))\geq\frac{p}{q}$, and Naor and Schechtman \cite{ns} recently proved that $s_{L_q(0,1)}(L_p(0,1))<1$.

In this note, we compute the values of $s(X)$ and $\alpha(X)$ for any quasi-Banach space $X$ which coarsely embeds into a Hilbert space. A few remarks are in order. If $X$ is a Banach space with a norm $\|.\|$, then the canonical metric on $X$ is given by $(x,y)\mapsto\|x-y\|$ and there is no problem with the definition of $s(X)$ and $\alpha(X)$. However, if $X$ is a general quasi-Banach space, we cannot speak about some canonical metric on $X$. The usual way how to introduce a metric on $X$ is to use a theorem of Aoki \cite{ao} and Rolewicz \cite{ro} (see also \cite[Proposition H.2]{bl}), which says that there is $0<r\leq1$ and an equivalent quasi-norm $\|.\|$ on $X$ which is \emph{$r$-subadditive}, that is, $\|x+y\|^r\leq\|x\|^r+\|y\|^r$ for all $x,y\in X$. Then $(x,y)\mapsto\|x-y\|^r$ is an invariant metric on $X$, which induces the same topology on $X$ as the original quasi-norm. Of course, there are many such metrics on $X$ and $s(X)$ and $\alpha(X)$ depend on the metric. (On the other hand, it is clear that the coarse embeddability of $X$ into a Hilbert space does not depend on the choice of the above described metric. When we say that $X$ coarsely embeds into a Hilbert space, it is understood that it is with respect to any such metric on $X$.) So, if $X$ is a quasi-Banach space which coarsely embeds into a Hilbert space, we compute $s(X)$ and $\alpha(X)$ with respect to any such metric on $X$. The result is stated in Theorem \ref{main_theorem}. If $X$ does not coarsely embed into a Hilbert space, then, of course, $s(X)=\alpha(X)=0$ with respect to any such metric on $X$. The corresponding results for the spaces $\ell_p$ and $L_p(0,1)$, $0<p<\infty$, mentioned above are a particular case of this since the canonical metrics on $\ell_p$ and $L_p(0,1)$ for any $0<p<\infty$ are of the form described above.

\section{Preliminaries}\label{prelim}

The notation and terminology is standard, as may be found for example in \cite{bl}. All vector spaces throughout the paper are supposed to be over the real field. Recall that if $(\Omega,\mathcal{B},\mu)$ is a measure space, where $\mu$ is a nonnegative measure, and $0<p<\infty$, then $L_p(\mu)$ is the (quasi-)Banach space of all equivalence classes of real measurable functions $f$ on $(\Omega,\mathcal{B},\mu)$ for which $\|f\|_p=\left(\int|f|^p\mathrm{d}\mu\right)^\frac{1}{p}<\infty$. If $1\leq p<\infty$, then $\|.\|_p$ is a norm on $L_p(\mu)$, whereas if $0<p<1$, it is only a quasi-norm (except in the trivial cases when $L_p(\mu)$ is zero or one-dimensional). If $0<p<1$, then the canonical metric on $L_p(\mu)$ is given by $d_p(f,g)=\|f-g\|_p^p=\int|f-g|^p\mathrm{d}\mu$. If $1\leq p<\infty$, then the canonical metric on $L_p(\mu)$ is given by the norm (as on any Banach space), and we denote it by $d_p$ as well, so $d_p(f,g)=\|f-g\|_p$. If not stated otherwise, all metric properties of the space $L_p(\mu)$ for any $0<p<\infty$ are regarded with respect to the metric $d_p$. Special cases like $L_p(0,1)$, $\ell_p$ and $\ell_p^n,n\in\en$, are defined in a standard way.

Let $X$ be a quasi-Banach space (for a brief overview of quasi-Banach spaces see for example \cite[Appendix H]{bl}). As we have already mentioned, by the theorem of Aoki and Rolewicz, there is $0<r\leq1$ and an equivalent quasi-norm $\|.\|$ on $X$ which is \emph{$r$-subadditive}, that is, $\|x+y\|^r\leq\|x\|^r+\|y\|^r$ for all $x,y\in X$. In particular, $(x,y)\mapsto\|x-y\|^r$ is an invariant metric on $X$, which we denote by $d_{\|.\|,r}$ and which induces the same topology on $X$ as the original quasi-norm. Let $0<r\leq1$. An $r$-subadditive quasi-norm on $X$ is called an $r$\emph{-norm} (so a $1$-norm is just a norm). If there is an equivalent $r$-norm on $X$, then we say that $X$ is $r$\emph{-normable} (and instead of $1$-normable we just say \emph{normable}). We denote by $M_X$ the set of all $0<r\leq1$ for which $X$ is $r$-normable. Furthermore, we define $r_X=\sup M_X$. By the theorem of Aoki and Rolewicz, we have $M_X\neq\emptyset$ and hence $r_X>0$. It is clear that $M_X$ is either the interval $(0,r_X]$ or $(0,r_X)$.

For example, if $X$ is a Banach space, then clearly $M_X=(0,1]$ and $r_X=1$. Let $0<p<1$ and consider a space $L_p(\mu)$ for some nonnegative measure $\mu$. Then $\|.\|_p$ is a $p$-norm on $L_p(\mu)$ and the canonical metric $d_p$ on $L_p(\mu)$ is the metric $d_{\|.\|_p,p}$. If $L_p(\mu)$ is in addition infinite-dimensional, then it is not hard to prove that $M_{L_p(\mu)}=(0,p]$, and hence $r_{L_p(\mu)}=p$.

As we have said, if $X$ is a quasi-Banach space which coarsely embeds into a Hilbert space, then our goal is to compute $s(X,d_{\|.\|,r})$ and $\alpha(X,d_{\|.\|,r})$ for any $r\in M_X$ and any equivalent $r$-norm $\|.\|$ on $X$. To state (and prove) the result, we will need the notion of type of a quasi-Banach space and some of its properties.

A quasi-Banach space $X$, equipped with a quasi-norm $\|.\|$, is said to have \emph{type} $p$, where $0<p\leq2$, if there is a constant $C>0$ such that for every $n\in\en$ and every $x_1,\dots,x_n\in X$ we have
$$\mathbb{E}\left\|\sum_{i=1}^n\varepsilon_ix_i\right\|^p\leq C^p\sum_{i=1}^n\|x_i\|^p,$$
where $\mathbb{E}$ denotes the expectation with respect to a uniform choice of signs $(\varepsilon_1,\dots,\varepsilon_n)\in\{-1,1\}^n$. Note that if $|||.|||$ is a quasi-norm on $X$ equivalent to $\|.\|$, then $(X,|||.|||)$ has type $p$ if and only if $(X,\|.\|)$ has type $p$. We define
$$p_X=\sup\{0<p\leq2: X\text{ has type }p\}.$$
The quantities $p_X$ and $r_X$ are related as follows.

\begin{lem}\label{p_X_and_r_X}
Let $X$ be a quasi-Banach space. Then $r_X=\min\{p_X,1\}$.
\end{lem}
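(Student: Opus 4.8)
The plan is to prove the two inequalities $r_X\le\min\{p_X,1\}$ and $r_X\ge\min\{p_X,1\}$ separately. For the first inequality, suppose $r\in M_X$, so that $X$ admits an equivalent $r$-norm $\|.\|$. One checks directly that an $r$-normed space has type $r$: indeed, using $r$-subadditivity we get $\|\sum_{i=1}^n\varepsilon_ix_i\|^r\le\sum_{i=1}^n\|x_i\|^r$ for every choice of signs, so $\mathbb{E}\|\sum\varepsilon_ix_i\|^r\le\sum\|x_i\|^r$ with constant $C=1$; this uses that $r\le1$ so that $\min\{r,2\}=r$ is a legitimate type exponent. Hence every $r\in M_X$ satisfies $r\le p_X$, and trivially $r\le1$, so $r\le\min\{p_X,1\}$; taking the supremum over $M_X$ gives $r_X\le\min\{p_X,1\}$.

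For the reverse inequality I want to show that if $0<p<\min\{p_X,1\}$ then $p\in M_X$, i.e. $X$ is $p$-normable; this will force $r_X\ge p$ for all such $p$ and hence $r_X\ge\min\{p_X,1\}$. Fix such a $p$ and pick $q$ with $p<q<\min\{p_X,1\}$ so that $X$ has type $q$ with some constant $C$, say with respect to the quasi-norm $\|.\|$ furnished by Aoki--Rolewicz, which we may take to be $s$-subadditive for some $0<s\le1$. The idea is to build an equivalent $p$-norm by the standard "renorming via finite sums" procedure: define
$$\vertiii{x}=\inf\Big\{\Big(\sum_{i=1}^n\|x_i\|^p\Big)^{1/p}:n\in\en,\ x=\sum_{i=1}^n x_i\Big\}.$$
By construction $\vertiii{.}$ is $p$-subadditive and positively homogeneous, and $\vertiii{x}\le\|x\|$. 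The crucial point is the reverse estimate $\vertiii{x}\ge c\|x\|$ for some $c>0$, which is equivalent to a uniform bound of the form $\|\sum_{i=1}^n x_i\|\le K\big(\sum_{i=1}^n\|x_i\|^p\big)^{1/p}$ for all finite families. This is exactly the place where type $q>p$ enters: a space of type $q$ satisfies such an inequality with the $p$-sum replaced by the $q$-sum up to reindexing, and then the elementary inequality $\big(\sum a_i^q\big)^{1/q}\le\big(\sum a_i^p\big)^{1/p}$ for $0<p<q$ closes the gap. Concretely, one estimates $\|\sum x_i\|$ by symmetrization, replacing the fixed sum by the Rademacher average $\mathbb{E}\|\sum\varepsilon_i x_i\|$ at the cost of a constant depending only on the quasi-norm's modulus (here the $s$-subadditivity is used to control $\|\sum x_i\|$ by a multiple of $\mathbb{E}\|\sum\varepsilon_i x_i\|$), and then applies the type-$q$ inequality.

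The main obstacle is this last step: getting from "type $q$" to the deterministic finite-sum inequality $\|\sum x_i\|\le K\big(\sum\|x_i\|^p\big)^{1/p}$ with a constant independent of $n$, in the quasi-normed setting where the triangle inequality is only available in the weakened $s$-subadditive form. One must be careful that the symmetrization bound $\|\sum_{i=1}^n x_i\|\le C_s\,\mathbb{E}\|\sum_{i=1}^n\varepsilon_i x_i\|$ holds with $C_s$ independent of $n$; this follows from $s$-subadditivity by writing $\|\sum x_i\|^s\le\mathbb{E}\|\sum x_i\|^s$ against a generic sign pattern is not enough, so instead one uses a standard averaging trick (for each fixed sign vector $\varepsilon$, $\sum x_i$ and $\sum\varepsilon_i x_i$ are related by a bounded number of additions) together with the fact that in a space of type $q<1$ the Rademacher averages in $L_q$ and $L_1$ are comparable. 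Once this uniform symmetrization is in hand, the rest is the routine computation sketched above. An alternative, which avoids some of these technicalities, is to invoke the known fact (essentially the Aoki--Rolewicz argument combined with the definition of type) that a quasi-Banach space of type $q$ with $q\le1$ is $q'$-normable for every $q'<q$; I would cite or reproduce this, and then the lemma follows immediately by combining it with the first paragraph.
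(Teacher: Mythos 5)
Your proof of $r_X\le\min\{p_X,1\}$ is correct and is exactly the paper's argument: an $r$-subadditive quasi-norm has type $r$ with constant $1$, and type is invariant under passing to an equivalent quasi-norm. The problem is in your primary argument for the reverse inequality, precisely at the step you yourself flag as ``the main obstacle''. The symmetrization inequality $\bigl\|\sum_{i=1}^n x_i\bigr\|\le C\,\mathbb{E}\bigl\|\sum_{i=1}^n\varepsilon_i x_i\bigr\|$ with $C$ independent of $n$ is false, already for $X=\er$ and $x_1=\dots=x_n=1$: the left-hand side is $n$ while the right-hand side is of order $\sqrt{n}$. Your parenthetical justification is also incorrect: for a fixed sign vector $\varepsilon$ one has $\sum_i x_i-\sum_i\varepsilon_i x_i=2\sum_{i:\varepsilon_i=-1}x_i$, a sum of up to $n$ terms rather than a bounded number of additions, and controlling such partial sums uniformly is exactly the inequality you are trying to establish. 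So the passage from the type inequality (which bounds an \emph{average} of signed sums) to the deterministic bound $\|\sum_i x_i\|\le K\bigl(\sum_i\|x_i\|^p\bigr)^{1/p}$ cannot be achieved by symmetrization; Kalton's actual proof of this implication is a genuinely different iteration argument, and without it your renorming construction does not close.

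Your fallback --- citing the known fact that a quasi-Banach space of type $q\le1$ is $q'$-normable for every $q'<q$ --- is legitimate and is essentially what the paper does: it invokes Kalton's Theorem 2.1 from \cite{ka03}, part (1) when $p_X\le1$ and part (2) (type $>1$ implies normability, a deep theorem) when $p_X>1$. But be aware that the cited fact is a theorem of Kalton, not ``essentially the Aoki--Rolewicz argument combined with the definition of type''; it is the whole content of this direction of the lemma. One small genuine point in your favour: since type $p$ implies type $q$ for all $0<q<p$ (by Jensen's inequality and $\bigl(\sum a_i^{p}\bigr)^{q/p}\le\sum a_i^{q}$), your route of applying the $q\le1$ statement for $q$ arbitrarily close to $1$ yields $r_X=1$ when $p_X>1$ without invoking the harder part (2). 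That is a valid simplification --- but only once the $q\le1$ statement is properly cited rather than derived by the flawed symmetrization sketch.
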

\begin{proof}
If $r\in M_X$, then it is clear that $X$ has type $r$. Hence $r_X\leq p_X$ and since $r_X\leq1$, we obtain $r_X\leq\min\{p_X,1\}$.

Let us show that $r_X\geq\min\{p_X,1\}$. If $p_X>1$, then, by \cite[Theorem 2.1(2)]{ka03}, $X$ is normable, and therefore $r_X=1=\min\{p_X,1\}$. If $p_X\leq1$, then, by \cite[Theorem 2.1(1)]{ka03}, $r_X\geq p_X=\min\{p_X,1\}$.
\end{proof}

In particular, it follows from Lemma \ref{p_X_and_r_X} that if $X$ is a quasi-Banach space, then $p_X>0$ (since $r_X>0$). Let us mention that we will not actually need the full strength of Lemma \ref{p_X_and_r_X}, but only the trivial inequality $r_X\leq p_X$.

We will also use the following result. For Banach spaces it is the classical theorem of Maurey and Pisier \cite{mp} (see also \cite[13.2. Theorem]{ms}). The generalization to quasi-Banach spaces presented here was proved by Kalton \cite{ka77}. Recall that if $X$ and $Y$ are quasi-Banach spaces and $T\colon X\to Y$ is a linear mapping, then one defines $\|T\|=\sup\{\|T(x)\|:\|x\|\leq1\}$. A quasi-Banach space $Y$ is said to be \emph{finitely representable} in a quasi-Banach space $X$ if for every $\varepsilon>0$ and every finite-dimensional subspace $E$ of $Y$ there is a subspace $F$ of $X$ with $\dim F=\dim E$ and a linear isomorphism $T\colon E\to F$ such that $\|T\|\cdot\|T^{-1}\|\leq1+\varepsilon$.

\begin{thm}[Kalton]\label{maurey-pisier-kalton}
Let $X$ be an infinite-dimensional quasi-Banach space equipped with an $r$-norm, where $0<r\leq1$. Then $\ell_{p_X}$ is finitely representable in $X$.
\end{thm}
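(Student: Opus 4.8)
The plan is to carry out the Maurey--Pisier argument in the quasi-Banach category, the essential external ingredient being a form of Krivine's theorem valid for quasi-Banach spaces (due to Kalton). First I would dispose of the case $p_X>1$: then $X$ is normable (by \cite[Theorem~2.1(2)]{ka03}, as used in the proof of Lemma~\ref{p_X_and_r_X}), so after passing to an equivalent norm $X$ is a Banach space with Maurey--Pisier index $p_X\in(1,2]$, and the conclusion is the classical theorem of Maurey and Pisier \cite{mp} --- finite representability of a given $\ell_q$ in an infinite-dimensional space being unaffected by an equivalent renorming. So assume from now on that $p_X\le1$, and let $\mathcal{R}$ be the set of $q\in(0,2]$ such that $\ell_q$ is finitely representable in $X$; since finitely supported vectors are dense in $\ell_q$, one has $q\in\mathcal{R}$ exactly when for every $\varepsilon>0$ and every $n\in\en$ the space $\ell_q^n$ embeds $(1+\varepsilon)$-isomorphically into $X$. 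The goal is to show $p_X\in\mathcal{R}$.

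Next I would record two soft facts. (a) $\mathcal{R}$ is closed in $(0,2]$: the Banach--Mazur distance between $\ell_q^n$ and $\ell_{q'}^n$ is at most $n^{|1/q-1/q'|}$, which tends to $1$ as $q'\to q$ for fixed $n$, and since $\|ST\|\le\|S\|\,\|T\|$ for linear maps between quasi-normed spaces one may compose embeddings and pass to a limit. (b) Every element of $\mathcal{R}$ is at least $p_X$: having type $p$ with a fixed constant involves only finitely many vectors, so this property passes from $X$ to every space finitely representable in $X$; hence $q\in\mathcal{R}$ forces $p_{\ell_q}\ge p_X$, and $p_{\ell_q}=q$ for $0<q\le2$ (Khintchine's inequality gives type $q$, and testing on the unit vector basis excludes type $q'>q$). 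By (b) it suffices to produce, for each $p$ with $p_X<p<2$, a member of $\mathcal{R}$ not exceeding $p$: then $p_X$ is a limit of members of $\mathcal{R}$, and (a) yields $p_X\in\mathcal{R}$.

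So fix $p$ with $p_X<p<2$. Since $X$ fails type $p$, the quantities $T_p^{(N)}(X)=\sup\{(\mathbb{E}\|\sum_{i\le N}\varepsilon_ix_i\|^p)^{1/p}(\sum_{i\le N}\|x_i\|^p)^{-1/p}:x_1,\dots,x_N\in X\}$ are unbounded in $N$. The first step is to upgrade this --- via a submultiplicativity estimate for $N\mapsto T_p^{(N)}(X)$, proved by substituting near-extremal systems into one another --- to the existence, for each $m$, of a normalized finite system in which every subsum of $L\ge m$ of the terms has norm $\gtrsim L^{1/p}$. Concatenating such systems into one bounded sequence with no norm-Cauchy subsequence and running the Brunel--Sucheston construction --- which uses only Ramsey's theorem and the quasi-triangle inequality and so remains available --- produces a semi-normalized, subsymmetric basic sequence $(e_i)$ in a quasi-Banach space finitely representable in $X$, whose fundamental function satisfies $\|e_1+\cdots+e_N\|\asymp N^{1/q}$ for a single exponent $q\le p$. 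Applying the quasi-Banach Krivine theorem to $(e_i)$, combined with the asymptotics of its fundamental function, then produces a member $q_0$ of $\mathcal{R}$ with $q_0\le p$; this is the element we needed, and letting $p\downarrow p_X$ completes the proof.

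The main obstacle is the third paragraph. On the one hand, one must verify that Krivine's theorem and the Brunel--Sucheston machinery genuinely survive the replacement of the triangle inequality by mere $r$-subadditivity; this is the substance of Kalton's contribution, and the reason the statement is formulated for quasi-Banach spaces. On the other hand, there is the combinatorial bookkeeping that turns the qualitative failure of type $p$ into a subsymmetric sequence whose fundamental function grows at the sharp rate, together with the verification that Krivine's index $q_0$ is confined to $[p_X,p]$ rather than lying somewhere in $[p_X,2]$. The soft facts (a) and (b), the reduction, and the normable case are routine.
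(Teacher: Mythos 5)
First, a point of comparison: the paper does not actually prove this theorem --- it is quoted from Kalton's unpublished paper \cite[Theorem 4.6]{ka77}, the only glue being the (easy, unproved) identification of Kalton's convexity type $p(X)$ with the type index $p_X$ used here. Your proposal instead tries to reconstruct a Maurey--Pisier-style argument in the quasi-Banach category. The skeleton is correct and the soft ingredients check out: the reduction of the case $p_X>1$ to the classical theorem of \cite{mp} via normability \cite[Theorem 2.1(2)]{ka03}, the closedness of the set $\mathcal{R}$ of exponents $q$ with $\ell_q$ finitely representable in $X$, the lower bound $q\ge p_X$ for $q\in\mathcal{R}$ (type with a fixed constant is a local property and $p_{\ell_q}=q$ for $0<q\le2$), and the limiting argument are all sound and are indeed how the classical proof is organized.

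The gap is that everything non-routine has been placed in your third paragraph and then declared an ``obstacle'' rather than proved, and that deferred content is precisely the substance of the theorem being cited. Two of the postponed steps are not mere bookkeeping. (i) The submultiplicativity of $N\mapsto T_p^{(N)}(X)$ and the extraction of systems whose subsums of length $L$ have norm $\gtrsim L^{1/p}$ rely, in the Banach-space proof, on convexity-based tools --- the inequality $\|\mathbb{E}f\|\le\mathbb{E}\|f\|$ for vector-valued expectations and the contraction principle --- which are exactly what an $r$-norm with $r<1$ fails to provide; reworking this is one of the places where Kalton's paper earns its keep, and you cannot take it for granted. (ii) Even granting a subsymmetric sequence with fundamental function $\asymp N^{1/q}$, $q\le p$, and a quasi-Banach Krivine theorem, you assert without argument that the Krivine index $q_0$ it produces satisfies $q_0\le p$; Krivine's theorem by itself only yields a nonempty closed set of admissible indices, and pinning one of them below $p$ requires a separate argument relating the Krivine set to the growth rate of the fundamental function. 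So as a blueprint for how \cite[Theorem 4.6]{ka77} is presumably proved, your outline is plausible and well organized; as a proof of the stated theorem it is incomplete exactly where the theorem is hard. The honest alternatives are either to cite Kalton as the paper does, or to actually carry out the quasi-Banach versions of the Brunel--Sucheston and Krivine arguments, which is a substantial undertaking and not something that can be waved through as ``combinatorial bookkeeping.''
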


The above theorem follows from \cite[Theorem 4.6]{ka77}. Let us mention that \cite[Theorem 4.6]{ka77} is stated for the so-called \emph{convexity type} $p(X)$ of $X$ instead of for our $p_X$. However, it is not difficult to prove using the results of \cite{ka77} that $p(X)=p_X$.

\section{Main result}\label{section_main}

\begin{thm}\label{main_theorem}
Let $X$ be a quasi-Banach space which coarsely embeds into a Hilbert space. Then for every $r\in M_X$ and every equivalent $r$-norm $\|.\|$ on $X$ we have 
$$s(X,d_{\|.\|,r})=\alpha(X,d_{\|.\|,r})=\min\left\{\frac{p_X}{2r},1\right\}.$$
\end{thm}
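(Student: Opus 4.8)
The plan is to prove the two inequalities $s(X,d_{\|.\|,r})\geq\min\{p_X/2r,1\}$ and $\alpha(X,d_{\|.\|,r})\leq\min\{p_X/2r,1\}$ separately; since $s\leq\alpha$ always, this pins down both quantities. For the lower bound on $s$, fix $r\in M_X$, an equivalent $r$-norm $\|.\|$ on $X$, and let $\theta<\min\{p_X/2r,1\}$; I want a bi-Lipschitz embedding of $(X,d_{\|.\|,r}^{\theta})$, i.e.\ of $(X,\|x-y\|^{r\theta})$, into a Hilbert space. The key input should be Randrianarivony's theorem: since $X$ coarsely embeds into a Hilbert space, $X$ is linearly isomorphic to a subspace of $L_0(\mu)$. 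Actually the more useful reformulation is via negative type / $L_p$-embeddability: for $0<p\le 2$, a quasi-Banach subspace of $L_0(\mu)$ with sufficiently high type embeds linearly into $L_p(\mu)$, and $L_p(\mu)$ with its metric $\|f-g\|_p^p$ (for $p<1$) or $\|f-g\|_p$ (for $p\ge 1$), after the snowflaking exponent $p/2$ if $p\le1$, resp.\ $1/2$ if $1\le p\le 2$, embeds bi-Lipschitzly into Hilbert space — this is the classical Schoenberg fact that $(L_p, \|\cdot\|_p^{p/2})$ is of negative type, hence isometric to a subset of Hilbert space. So the strategy is: from $p_X>2r\theta$, find $p$ with $2r\theta<p\le 2$ and $p\le p_X$ such that $X$ embeds linearly (isomorphically onto a closed subspace) into $L_p(\mu)$; then compose the linear embedding $X\hookrightarrow L_p(\mu)$ with the Schoenberg-type embedding of the appropriate snowflaking of $L_p(\mu)$ into Hilbert space, and check the snowflaking exponents match up to give exactly $\theta$ on the $d_{\|.\|,r}$ side. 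The type hypothesis is what makes the linear embedding into $L_p$ possible; this is a quasi-Banach analogue of the fact that a Banach space of type $p$ embeds into $L_p$ under extra hypotheses — here I expect to use Randrianarivony's structure theorem together with a type-based selection of $p$.

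For the upper bound $\alpha(X,d_{\|.\|,r})\leq\min\{p_X/2r,1\}$, the bound by $1$ is automatic since $X$ is unbounded. So suppose toward a contradiction that $\alpha(X,d_{\|.\|,r})>p_X/2r$, i.e.\ there is $\alpha>p_X/2r$, a Hilbert space $H$, a large-scale Lipschitz map $T\colon X\to H$ and constants $C,t>0$ with $\|T(x)-T(y)\|_H\geq C\|x-y\|^{r\alpha}$ whenever $\|x-y\|^r\geq t$. By Theorem~\ref{maurey-pisier-kalton}, $\ell_{p_X}$ is finitely representable in $X$ (after passing to the $r$-norm, which we have). Restricting $T$ to larger and larger finite-dimensional subspaces of $X$ that are $(1+\varepsilon)$-isomorphic to $\ell_{p_X}^n$, and rescaling (using the large-scale Lipschitz bound on the upper side and the power-$r\alpha$ lower bound on the threshold scale $t$), I would extract coarse embeddings of $\ell_{p_X}^n$ into Hilbert space with compression exponent arbitrarily close to $\alpha$ — more precisely, with a uniform-in-$n$ lower bound of the form $\|\cdot\|_H\gtrsim (\text{distance})^{\alpha'}$ for some $\alpha'$ still exceeding $p_X/2$ on the canonical $\ell_{p_X}$-metric. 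But by \eqref{s_and_alpha_little_lp} (and its finite-dimensional quantitative version, which is what actually gets used in the literature), the Hilbert space compression exponent of $\ell_{p_X}$ is exactly $\max\{p_X,1\}/2$; when $p_X\le1$ this is $p_X/2$, and when $p_X>1$ we have $r=1$ by Lemma~\ref{p_X_and_r_X} and the bound is $1$, already covered. So the contradiction comes from comparing with the known optimal value for $\ell_{p_X}$. The transfer from finite representability of $\ell_{p_X}$ to an actual compression-exponent estimate is the step requiring care, because finite representability is only an isomorphic, finite-dimensional statement, so I must run a compactness/rescaling argument producing a genuine coarse embedding of the infinite-dimensional $\ell_{p_X}$ (or at least a sequence of finite-dimensional ones with the required uniform estimates, which is enough to contradict the known value).

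The main obstacle I anticipate is the lower bound, specifically producing the linear embedding $X\hookrightarrow L_p(\mu)$ for a good value of $p$ close to $p_X$. Randrianarivony's theorem gives $X\subset L_0(\mu)$, but to get into $L_p(\mu)$ one needs to control integrability, and the natural tool is the type-$p$ hypothesis combined with Kalton-style arguments about subspaces of $L_0$; one likely must choose $p$ strictly below $p_X$ (so that $X$ genuinely has type $p$ with a constant) and then use that a subspace of $L_0(\mu)$ of type $p$ sits inside $L_p(\mu)$ up to isomorphism. Getting the snowflaking bookkeeping exactly right — tracking how the exponent $r$ of the metric $d_{\|.\|,r}$, the integrability exponent $p$, and the Schoenberg snowflaking $\min\{p,1\}/2$ combine, and verifying the composition is bi-Lipschitz on $(X,d_{\|.\|,r}^{\theta})$ for every $\theta<\min\{p_X/2r,1\}$ — is routine but must be done carefully so that the supremum over $\theta$ yields precisely $\min\{p_X/2r,1\}$ and not something smaller. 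I would also need to handle the edge case $p_X/2r\ge1$ (equivalently $p_X\ge 2r$), where the claimed value is $1$ and one wants bi-Lipschitz embeddings of snowflakings with exponent approaching $1$; here $p$ can be taken close to $2$ and the argument is the same, with the Schoenberg exponent $1/2$ against the $r$-metric giving exponent $\to 1$ when $r$ is appropriately small, though one should double-check consistency with Lemma~\ref{p_X_and_r_X} forcing $r=1$ when $p_X>1$.
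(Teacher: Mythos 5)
Your lower bound $s(X,d_{\|.\|,r})\geq\min\left\{\frac{p_X}{2r},1\right\}$ follows the paper's route exactly: Randrianarivony's theorem \cite{ra} places $X$ inside $L_0(\mu)$, the Nikishin--Maurey type result \cite[Theorem 8.15]{bl} upgrades this to a linear isomorphic embedding into $L_p(\mu)$ for every $0<p<p_X$, and Schoenberg's negative-definiteness of $\|\cdot\|_p^p$ gives a map $S$ into Hilbert space with $\|S(x)-S(y)\|_H=\|x-y\|_p^{p/2}$; composing yields a bi-Lipschitz embedding of $\left(X,d_{\|.\|,r}^{p/2r}\right)$ whenever $\frac{p}{2r}\leq1$. (One small slip: for $p<1$ the Schoenberg map is the $\frac{1}{2}$-snowflake of the canonical metric $\|f-g\|_p^p$ and the $\frac{p}{2}$-snowflake of the quasi-norm, not the other way around as you wrote; this washes out in the final exponent $\frac{p}{2r}$ but you should state it correctly.)

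The upper bound is where you have a genuine gap. You reduce, via finite representability of $\ell_{p_X}$ in $X$ (Theorem \ref{maurey-pisier-kalton}), to the statement that $\alpha(\ell_{p_X})=\frac{\max\{p_X,1\}}{2}$, and you propose to derive a contradiction from uniformly good embeddings of $\ell_{p_X}^n$ into Hilbert space. But the value you are quoting is a statement about the infinite-dimensional space $\ell_{p_X}$, and finite representability does \emph{not} give you a copy of $\ell_{p_X}$ inside $X$, only a sequence of uniformly isomorphic copies of $\ell_{p_X}^n$ that need not be nested or compatible. To close the argument you would need either (a) the fact that embeddability into Hilbert space with prescribed moduli is finitely determined (a Schoenberg-type ultrafilter argument), so that uniform estimates on finite subsets of the $\ell_{p_X}^n$'s yield a genuine embedding of $\ell_{p_X}$ contradicting the known value, or (b) an explicit finite-dimensional quantitative obstruction. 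Neither is supplied; you name the issue but the parenthetical claim that ``a sequence of finite-dimensional ones with the required uniform estimates is enough'' is precisely the assertion that needs proof. There is also a circularity hazard: for $p_X\leq1$ the ``known value'' \eqref{s_and_alpha_little_lp} is a special case of Theorem \ref{main_theorem} itself, so you must cite an independent proof of the upper bound for $\ell_p$, $p\leq1$ (Baudier \cite{ba}) --- whose proof is itself the finite-dimensional argument you are missing.

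The paper avoids all of this by running option (b) directly: it maps the Hamming cubes $H_n=\{0,1\}^n$ through $\ell_{p_X}^n$ into $X$ with two-sided bounds $d_1(x,y)^{r/p_X}\leq\|f_n(x)-f_n(y)\|^r\leq 2^r d_1(x,y)$, invokes Enflo's lower bound $c_H(H_n,d_1)=\sqrt{n}$ \cite{en} for the Euclidean distortion of the cube, and feeds both into a version of Austin's lemma (Lemma \ref{austin_lemma}, after \cite{au}) which converts a diameter-power lower bound on distortion of a sequence of finite spaces into an upper bound on the compression exponent. This is entirely finite-dimensional, needs no limiting argument, and does not use coarse embeddability of $X$ at all. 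If you want to salvage your reduction to $\ell_{p_X}$ instead, you must either prove the finite-determination transfer or note that it only works cleanly when $X$ actually contains an isomorphic copy of $\ell_{p_X}$ (which the paper obtains, for $p_X>1$ only, via Guerre--Levy in a separate remark).
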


Before we turn to the proof of the above theorem, let us make a few remarks. First, note that Theorem \ref{main_theorem} yields in particular that if $X$ is a Banach space which coarsely embeds into a Hilbert space, then 
\begin{equation}\label{main_for_Banach}
s(X)=\alpha(X)=\frac{p_X}{2}.
\end{equation} 

As we have said before, \eqref{s_and_alpha_little_lp} and \eqref{s_and_alpha_cap_Lp} follow from Theorem \ref{main_theorem}. Indeed, let $0<p\leq2$ and consider an infinite-dimensional space $L_p(\mu)$ for some nonnegative measure $\mu$. Then $L_p(\mu)$ coarsely embeds into a Hilbert space (see \cite[Proposition 4.1]{no} or Lemma \ref{emb_of_Lp} bellow). If $1\leq p\leq2$, then we can use \eqref{main_for_Banach} and obtain
$$s(L_p(\mu))=\alpha(L_p(\mu))=\frac{p_{L_p(\mu)}}{2}=\frac{p}{2}.$$
If $0<p<1$, then Theorem \ref{main_theorem} yields
\begin{align*}
s(L_p(\mu))=\alpha(L_p(\mu))=s(L_p(\mu),d_{\|.\|_p,p})=\alpha(&L_p(\mu),d_{\|.\|_p,p})\\&=\min\left\{\frac{p_{L_p(\mu)}}{2p},1\right\}=\frac{1}{2}.
\end{align*}
In particular, this gives \eqref{s_and_alpha_little_lp} and \eqref{s_and_alpha_cap_Lp}.

Let $X$ be a quasi-Banach space which coarsely embeds into a Hilbert space, let $r\in M_X$ and let $\|.\|$ be an equivalent $r$-norm on $X$. By Theorem \ref{main_theorem} and Lemma \ref{p_X_and_r_X} we have
$$\alpha(X,d_{\|.\|,r})=\min\left\{\frac{p_X}{2r},1\right\}\geq\min\left\{\frac{p_X}{2r_X},1\right\}\geq\frac{1}{2},$$
and this estimate is of course sharp ($\alpha(\ell_1)=\frac{1}{2}$). This is not true for general metric spaces. For example, Arzhantseva, Dru\c{t}u and Sapir \cite[Theorem 1.5]{ads} proved that for every $\alpha\in[0,1]$ there is a finitely generated group, equipped with a word length metric, that coarsely embeds into a Hilbert space and whose Hilbert space compression exponent is equal to $\alpha$. 

Note also that in Theorem \ref{main_theorem} we cannot omit the assumption that $X$ coarsely embeds into a Hilbert space. Indeed, if $X$ is a quasi-Banach space which does not coarsely embed into a Hilbert space, $r\in M_X$ and $\|.\|$ is an equivalent $r$-norm on $X$, then $s(X,d_{\|.\|,r})=\alpha(X,d_{\|.\|,r})=0<\min\left\{\frac{p_X}{2r},1\right\}$, since $p_X>0$.

Let us now prove Theorem \ref{main_theorem}. Let us first consider the inequality $s(X,d_{\|.\|,r})\geq\min\left\{\frac{p_X}{2r},1\right\}$. Our method of proof is a quantification of Randrianarivony's proof that if $X$ is a quasi-Banach space which is linearly isomorphic to a subspace of $L_0(\mu)$ for some probability space $(\Omega,\mathcal{B},\mu)$, then $X$ coarsely embeds into a Hilbert space \cite[Proof of Theorem 1]{ra}. We will use the following well-known fact.

\begin{lem}\label{emb_of_Lp}
Let $0<p\leq2$ and let $(\Omega,\mathcal{B},\mu)$ be a measure space, where $\mu$ is a nonnegative measure. Then there is a Hilbert space $H$ and a mapping $S\colon L_p(\mu)\to H$ such that $\|S(x)-S(y)\|_H=\|x-y\|^{\frac{p}{2}}_p$ for all $x,y\in L_p(\mu)$.
\end{lem}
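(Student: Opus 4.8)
The plan is to deduce this lemma from the classical fact that, for $0<p\le 2$, the $p$-th power of the $L_p(\mu)$-metric is a negative definite kernel; equivalently, by Schoenberg's theorem, $\bigl(L_p(\mu),\|\cdot-\cdot\|_p^{p/2}\bigr)$ embeds isometrically into a Hilbert space. The case $p=2$ is trivial (take $H=L_2(\mu)$ and $S=\mathrm{id}$), so I assume $0<p<2$, and I would give the embedding explicitly, since the explicit form is short and self-contained.

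First I would treat the scalar case, i.e.\ when $\Omega$ is a single atom, so that $L_p(\mu)=\er$. Start from the elementary identity
\[
|t|^p=c_p\int_0^\infty\frac{1-\cos(tu)}{u^{1+p}}\,\mathrm{d}u,\qquad t\in\er,
\]
which holds with the positive constant $c_p=\bigl(\int_0^\infty(1-\cos v)\,v^{-1-p}\,\mathrm{d}v\bigr)^{-1}$, the integral being finite exactly because $0<p<2$. Rewrite the integrand using $\psi_u\colon\er\to\er^2$, $\psi_u(s)=(\cos(su),\sin(su))$, for which $\|\psi_u(s)-\psi_u(t)\|_{\er^2}^2=2\bigl(1-\cos((s-t)u)\bigr)$. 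Then, with $H_0:=L_2\bigl((0,\infty),\er^2;\tfrac{c_p}{2}\,u^{-1-p}\,\mathrm{d}u\bigr)$ and $\Phi\colon\er\to H_0$ defined by $\Phi(s)(u)=\psi_u(s)-\psi_u(0)$, one gets $\Phi(0)=0$ and $\|\Phi(s)-\Phi(t)\|_{H_0}^2=|s-t|^p$ for all $s,t\in\er$. The only thing needing verification is that $\Phi(s)$ really lies in $H_0$: near $u=0$ one has $\|\psi_u(s)-\psi_u(0)\|_{\er^2}^2=O(u^2)$, so the integrand is $O(u^{1-p})$, integrable exactly because $p<2$; near $u=\infty$ the integrand is $O(u^{-1-p})$, integrable because $p>0$. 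In particular $\Phi$ is well defined and continuous.

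Next I would lift this to $L_p(\mu)$ by integrating over $\omega$. Put $H:=L_2(\Omega,\mu;H_0)$ and define $S\colon L_p(\mu)\to H$ by $S(x)(\omega)=\Phi(x(\omega))$. Since $\Phi$ is continuous with separable range and $x$ is measurable, $S(x)$ is a Bochner-measurable $H_0$-valued function, and, using $\Phi(0)=0$,
\[
\|S(x)\|_H^2=\int_\Omega\|\Phi(x(\omega))\|_{H_0}^2\,\mathrm{d}\mu(\omega)=\int_\Omega|x(\omega)|^p\,\mathrm{d}\mu(\omega)=\|x\|_p^p<\infty,
\]
so $S$ indeed maps $L_p(\mu)$ into $H$. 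Finally, for $x,y\in L_p(\mu)$,
\[
\|S(x)-S(y)\|_H^2=\int_\Omega\|\Phi(x(\omega))-\Phi(y(\omega))\|_{H_0}^2\,\mathrm{d}\mu(\omega)=\int_\Omega|x(\omega)-y(\omega)|^p\,\mathrm{d}\mu(\omega)=\|x-y\|_p^p,
\]
which is the desired identity after taking square roots.

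There is no serious obstacle: the lemma is well known (it is the mechanism behind the isometric embedding of $L_p$, $0<p\le2$, into $L_0$ via symmetric $p$-stable random variables used in \cite{ra}, and it is a special case of Schoenberg's theorem). The only points requiring care are the integrability of $\Phi(s)$ at the endpoints $u=0$ and $u=\infty$ — which is precisely where the hypothesis $0<p\le2$ enters — and the routine check that $S(x)$ is a genuine element of $L_2(\Omega,\mu;H_0)$.
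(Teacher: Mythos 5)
Your proof is correct, and it takes a more explicit route than the paper. The paper disposes of the lemma in two lines by citing \cite{bl}: the function $\|\cdot\|_p^p$ is negative definite on $L_p(\mu)$ ([BL, p.~186, Examples (iii)]) and vanishes at $0$, so Schoenberg's construction ([BL, Proposition 8.5(ii)]) produces the Hilbert space $H$ and the map $S$ with $\|S(x)-S(y)\|_H^2=\|x-y\|_p^p$. You instead unpack both citations: the L\'evy--Khintchine-type identity $|t|^p=c_p\int_0^\infty(1-\cos(tu))u^{-1-p}\,\mathrm{d}u$ is exactly what underlies the negative definiteness of $|t|^p$, and your map $\Phi$ into $L_2\bigl((0,\infty),\er^2;\tfrac{c_p}{2}u^{-1-p}\,\mathrm{d}u\bigr)$ replaces the abstract GNS-type construction of Schoenberg's theorem with a concrete formula; the passage to $L_p(\mu)$ via the Bochner space $L_2(\Omega,\mu;H_0)$ and Fubini is clean, and you correctly flag the two points that need checking (integrability of $\Phi(s)$ at $u=0$ and $u=\infty$, which is precisely where $0<p<2$ enters, and Bochner measurability of $\omega\mapsto\Phi(x(\omega))$, handled by continuity and separable range of $\Phi$). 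What your version buys is self-containedness and an explicit embedding; what the paper's version buys is brevity and the observation that the statement is a formal consequence of general facts about negative definite kernels. Both are complete proofs of the same underlying fact.
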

\begin{proof}
The function $\|.\|^p_p$ on $L_p(\mu)$ is negative definite by \cite[p. 186, Examples. (iii)]{bl} (for a survey on negative definite kernels and functions see \cite[Chapter 8]{bl}) and $\|0\|^p_p=0$, and therefore, by \cite[Proposition 8.5(ii)]{bl}, there is a Hilbert space $H$ and a mapping $S\colon L_p(\mu)\to H$ such that $\|x-y\|^p_p=\|S(x)-S(y)\|_H^2$ for all $x,y\in L_p(\mu)$. Let us mention that the proof of \cite[Proposition 8.5(ii)]{bl} actually gives a complex Hilbert space $H$, but it is easy to see that there is a real Hilbert space $H$ with the desired properties.
\end{proof}

\begin{proof}[Proof of $s(X,d_{\|.\|,r})\geq\min\left\{\frac{p_X}{2r},1\right\}$ in Theorem \ref{main_theorem}]
Let $r\in M_X$ and let $\|.\|$ be an equivalent $r$-norm on $X$.

Since X coarsely embeds into a Hilbert space, \cite[Theorem 1]{ra} implies that there is a probability space $(\Omega,\mathcal{B},\mu)$ such that $X$ is linearly isomorphic to a subspace of $L_0(\mu)$. By \cite[Theorem 8.15]{bl}, then, the space $X$ is linearly isomorphic to a subspace of $L_p(\mu)$ for every $0<p<p_X$.

Let $p$ be such that $0<p<p_X$ and let $\varphi\colon X\to L_p(\mu)$ be an isomorphism into. Then there are $A,B>0$ such that
$$A\|x\|\leq\|\varphi(x)\|_p\leq B\|x\|\text{ for every }x\in X.$$
By Lemma \ref{emb_of_Lp}, there is a Hilbert space $H$ and a mapping $S\colon L_p(\mu)\to H$ such that
$$\|S(x)-S(y)\|_H=\|x-y\|^{\frac{p}{2}}_p\text{ for all }x,y\in L_p(\mu).$$
Let $T=S\circ\varphi$. Then $T$ maps $X$ into $H$ and for all $x,y\in X$ we have
$$A^\frac{p}{2}(\|x-y\|^r)^\frac{p}{2r}\leq\|T(x)-T(y)\|_H\leq B^\frac{p}{2}(\|x-y\|^r)^\frac{p}{2r}.$$
Hence if $p$ is such that $\frac{p}{2r}\leq1$, then $T$ is a bi-Lipschitz embedding of $(X,d_{\|.\|,r}^\frac{p}{2r})$ into $H$. It follows that $s(X,d_{\|.\|,r})\geq\min\left\{\frac{p_X}{2r},1\right\}$.
\end{proof}

\begin{rem}
The above proof actually shows that if $r\in M_X$ and $\|.\|$ is an equivalent $r$-norm on $X$, then for every $\alpha>0$ such that $\alpha<\frac{p_X}{2r}$ and $\alpha\leq1$ the space $(X,d_{\|.\|,r}^\alpha)$ admits a bi-Lipschitz embedding into a Hilbert space.
\end{rem}

Since the inequality $s(X,d_{\|.\|,r})\leq\alpha(X,d_{\|.\|,r})$ in Theorem \ref{main_theorem} is trivial, to complete the proof of Theorem \ref{main_theorem} it only remains to prove the inequality $\alpha(X,d_{\|.\|,r})\leq\min\left\{\frac{p_X}{2r},1\right\}$.

First, let us recall several useful notions. Let $(M,d_M)$ and $(N,d_N)$ be metric spaces and let $T\colon M\to N$ be a mapping. The \emph{Lipschitz constant} of $T$ is defined by
$$\lip(T)=\sup_{x,y\in M,x\neq y}\frac{d_N(T(x),T(y))}{d_M(x,y)}.$$
If $T\colon M\to N$ is injective, then the \emph{distortion} of $T$ is defined by
$$\distortion(T)=\lip(T)\cdot\lip(T^{-1}),$$
where $T^{-1}$ is regarded as a mapping on $T(M)$. Let us mention that if $\distortion(T)<\infty$, then $T$ is a bi-Lipschitz embedding and $\distortion(T)=\inf\frac{B}{A}$, where the infimum is taken over all constants $A,B>0$ for which \eqref{def_bi-Lip} holds. The \emph{distortion} of $M$ in $N$ is defined by
$$c_N(M)=\inf_{T\colon M\to N\text{ injective}}\distortion(T).$$

A metric space $(M,d_M)$ is called $d$\emph{-discrete}, where $d>0$, if $d_M(x,y)\geq d$ for all $x,y\in M,x\neq y$. The \emph{diameter} of $M$ is defined by $\diam(M)=\sup_{x,y\in M}d_M(x,y)$.

We will use the following modification of a lemma of Austin \cite[Lemma 3.1]{au}, which in its original form was used for estimating from above the compression exponents in $L_p$-spaces of certain groups. A version of Austin's lemma was also used by Baudier \cite[proof of Corollary 2.22]{ba} to show that if $0<p\leq1\leq q<\infty$, then $\alpha_{L_q}(\ell_p)\leq\frac{1}{\min\{q,2\}}$.

\begin{lem}\label{austin_lemma}
Let $X$ be a quasi-Banach space, $r\in M_X$ and $\|.\|$ be an equivalent $r$-norm on $X$. Let $Y$ be a Banach space. Suppose further that $(M_n,\delta_n)$, $n\in\en$, are finite $d$-discrete metric spaces, where $d>0$, such that
\begin{itemize}

\item $\diam(M_n)\to\infty$,

\item there is $\gamma\in(0,1]$ and $A,B>0$ such that for each $n\in\en$ there is a mapping $f_n\colon M_n\to X$ satisfying
$$A\delta_n(x,y)^\gamma\leq\|f_n(x)-f_n(y)\|^r\leq B\delta_n(x,y)\text{ for all }x,y\in M_n,$$

\item there is $\eta\in(0,1]$ and $K>0$ such that $c_Y(M_n)\geq K\diam(M_n)^\eta$ for every $n\in\en$.

\end{itemize}
Then $\alpha_Y(X,d_{\|.\|,r})\leq\frac{1-\eta}{\gamma}$.
\end{lem}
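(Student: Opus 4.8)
The plan is to show that \emph{every} exponent $\alpha\ge0$ admissible in the definition of $\alpha_Y(X,d_{\|.\|,r})$ satisfies $\alpha\le\frac{1-\eta}{\gamma}$; passing to the supremum then yields the claim (and if no such $\alpha$ exists, $\alpha_Y(X,d_{\|.\|,r})=0$ and, since $\eta\le1$, there is nothing to prove). So I fix such an $\alpha$, together with a large-scale Lipschitz map $T\colon(X,d_{\|.\|,r})\to Y$, say $d_Y(T(u),T(v))\le\lambda\|u-v\|^r+\mu$ for all $u,v\in X$ (here $\lambda>0$, $\mu\ge0$), and constants $C,t>0$ with $d_Y(T(u),T(v))\ge C(\|u-v\|^r)^\alpha$ whenever $\|u-v\|^r\ge t$. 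Observe first that the hypotheses on $f_n$ force the $d_{\|.\|,r}$-diameter of $f_n(M_n)$ to be at least $A\diam(M_n)^\gamma\to\infty$, so $X$ is unbounded and therefore $\alpha\le1$; in particular $\gamma\alpha\le1$, which is used below.

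The main step is a rescaling, in the spirit of Austin's lemma, that dilates $f_n$ so that \emph{all} pairwise images land beyond the threshold $t$, making the compression lower bound usable on every pair at once. Put $\sigma=\frac{t}{Ad^\gamma}$ (a constant independent of $n$) and set $g_n=\sigma^{1/r}f_n\colon M_n\to X$. By homogeneity of the quasi-norm $\|g_n(x)-g_n(y)\|^r=\sigma\|f_n(x)-f_n(y)\|^r$, so the hypothesis on $f_n$ together with the $d$-discreteness of $M_n$ gives, for all $x\ne y$ in $M_n$,
$$t\le\tfrac{t}{d^\gamma}\,\delta_n(x,y)^\gamma\le\|g_n(x)-g_n(y)\|^r\le\sigma B\,\delta_n(x,y).$$
Now let $h_n=T\circ g_n\colon M_n\to Y$. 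Applying the compression lower bound to each pair and using $\gamma\alpha\le1$ and $\delta_n(x,y)\le\diam(M_n)$,
$$d_Y(h_n(x),h_n(y))\ge C(\sigma A)^\alpha\delta_n(x,y)^{\gamma\alpha}\ge C(\sigma A)^\alpha\diam(M_n)^{\gamma\alpha-1}\,\delta_n(x,y),$$
so in particular $h_n$ is injective; on the other hand the large-scale Lipschitz bound together with $\delta_n(x,y)\ge d$ (to absorb the additive $\mu$) gives $d_Y(h_n(x),h_n(y))\le(\sigma B\lambda+\mu/d)\,\delta_n(x,y)$. Hence $h_n$ is a bi-Lipschitz embedding of $(M_n,\delta_n)$ into $Y$ with $\distortion(h_n)\le\frac{\sigma B\lambda+\mu/d}{C(\sigma A)^\alpha}\,\diam(M_n)^{1-\gamma\alpha}$, the leading constant being independent of $n$.

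Finally, combining this with the third hypothesis, for every $n$ with $\diam(M_n)>0$ we obtain
$$K\diam(M_n)^\eta\le c_Y(M_n)\le\distortion(h_n)\le\frac{\sigma B\lambda+\mu/d}{C(\sigma A)^\alpha}\,\diam(M_n)^{1-\gamma\alpha},$$
i.e.\ a fixed positive constant is bounded above by $\diam(M_n)^{1-\gamma\alpha-\eta}$ for all such $n$. Since $\diam(M_n)\to\infty$, this forces $1-\gamma\alpha-\eta\ge0$, that is $\alpha\le\frac{1-\eta}{\gamma}$. As $\alpha$ was an arbitrary admissible exponent, $\alpha_Y(X,d_{\|.\|,r})\le\frac{1-\eta}{\gamma}$.

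I expect the only genuinely delicate point to be the choice of the dilation factor $\sigma$: it must be picked so that simultaneously (i) every image pair clears the threshold $t$, so the compression estimate applies to \emph{all} pairs, and (ii) all the multiplicative constants produced along the way stay bounded independently of $n$, leaving the $\diam(M_n)$-power coming from $\gamma\alpha\le1$ as the only $n$-dependence. Everything else — absorbing the additive constant $\mu$ via $d$-discreteness, and turning the snowflake-type bound $\delta_n^{\gamma\alpha}$ into a genuine Lipschitz lower bound using $\delta_n\le\diam(M_n)$ — is routine bookkeeping.
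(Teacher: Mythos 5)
Your proof is correct and takes essentially the same route as the paper's: compose the maps $f_n$ with the large-scale Lipschitz map $T$, bound the distortion of the composition by a constant (independent of $n$) times $\diam(M_n)^{1-\gamma\alpha}$, and play this off against $c_Y(M_n)\geq K\diam(M_n)^\eta$ as $\diam(M_n)\to\infty$. The only cosmetic difference is that you make the rescaling needed to clear the threshold $t$ explicit via the dilation $g_n=\sigma^{1/r}f_n$ (and absorb the additive constant using $d$-discreteness), where the paper simply says ``by rescaling if necessary we may suppose $t\leq Ad^\gamma$'' and absorbs the additive constant using $\|x-y\|^r\geq t$.
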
 
\begin{proof}
If $\alpha_Y(X,d_{\|.\|,r})=0$, then the result is trivial, so suppose that $\alpha_Y(X,d_{\|.\|,r})>0$. Let $\alpha\in(0,\alpha_Y(X,d_{\|.\|,r})]$ be such that there is a large-scale Lipschitz mapping $T\colon (X,d_{\|.\|,r})\to Y$ and constants $C,t>0$ such that $\|T(x)-T(y)\|_Y\geq C(\|x-y\|^r)^\alpha$ if $\|x-y\|^r\geq t$. Then for some $D>0$ we have
$$C(\|x-y\|^r)^\alpha\leq\|T(x)-T(y)\|_Y\leq D\|x-y\|^r\text{ if }\|x-y\|^r\geq t.$$
By rescaling if necessary, we may clearly suppose that $t\leq Ad^\gamma$.

Let $n\in\en$. Let us estimate from above the distortion of $T\circ f_n\colon M_n\to Y$. If $x,y\in M_n,x\neq y$, then $$\|f_n(x)-f_n(y)\|^r\geq A\delta_n(x,y)^\gamma\geq Ad^\gamma\geq t,$$
hence 
$$C(\|f_n(x)-f_n(y)\|^r)^\alpha\leq\|T\circ f_n(x)-T\circ f_n(y)\|_Y\leq D\|f_n(x)-f_n(y)\|^r,$$
and therefore
$$CA^\alpha\delta_n(x,y)^{\gamma\alpha}\leq\|T\circ f_n(x)-T\circ f_n(y)\|_Y\leq DB\delta_n(x,y)$$
(in particular, $T\circ f_n$ is injective). Consequently,
\begin{align*}
&\distortion (T\circ f_n)=\lip(T\circ f_n)\cdot\lip\left((T\circ f_n)^{-1}\right)\\
&=\max_{x,y\in M_n,x\neq y}\frac{\|T\circ f_n(x)-T\circ f_n(y)\|_Y}{\delta_n(x,y)}\,\cdot\max_{x,y\in M_n,x\neq y}\frac{\delta_n(x,y)}{\|T\circ f_n(x)-T\circ f_n(y)\|_Y}\\
&\leq\frac{BD}{A^\alpha C}\max_{x,y\in M_n,x\neq y}\delta_n(x,y)^{1-\gamma\alpha}\\
&=\frac{BD}{A^\alpha C}\diam(M_n)^{1-\gamma\alpha}.
\end{align*}

Hence
$$c_Y(M_n)\leq\frac{BD}{A^\alpha C}\diam(M_n)^{1-\gamma\alpha}$$
and from the assumption that $c_Y(M_n)\geq K\diam(M_n)^\eta$ it follows that
$$\diam(M_n)^\eta\leq\frac{BD}{A^\alpha CK}\diam(M_n)^{1-\gamma\alpha}.$$
Since $\diam(M_n)\to\infty$, we obtain $\eta\leq1-\gamma\alpha$, and therefore $\alpha\leq\frac{1-\eta}{\gamma}$. Hence $\alpha_Y(X,d_{\|.\|,r})\leq\frac{1-\eta}{\gamma}$.
\end{proof}

\begin{proof}[Proof of $\alpha(X,d_{\|.\|,r})\leq\min\left\{\frac{p_X}{2r},1\right\}$ in Theorem \ref{main_theorem}]
If the space $X$ is finite-di\-men\-sion\-al, then the statement is trivial. So suppose that $X$ is infinite-dimensional, and let $r\in M_X$ and $\|.\|$ be an equivalent $r$-norm on $X$. To obtain the upper estimate for $\alpha(X,d_{\|.\|,r})$, we will use Lemma \ref{austin_lemma}. The role of the metric spaces $(M_n,\delta_n)$ in Lemma \ref{austin_lemma} will be played by the following sequence of metric spaces. For $n\in\en$, let $H_n=\{0,1\}^n$ (the so-called \emph{Hamming cube}), equipped with the $\ell_1$ metric $d_1$ (i.e. the metric inherited from $\ell_1^n$ when considering $H_n$ as a subset of $\ell_1^n$). In other words, the distance between two sequences from $H_n$ is equal to the number of places where they differ (this is also called the \emph{Hamming distance}). Then $(H_n,d_1)$ is finite, $1$-discrete and $\diam(H_n,d_1)=n$. 

Let us first construct appropriate embeddings of the Hamming cubes $H_n$ into $X$. Let $n\in\en$. By Theorem \ref{maurey-pisier-kalton}, there is a linear mapping $S_n\colon\ell^n_{p_X}\to X$ such that
$$\|x\|_{p_X}\leq\|S_n(x)\|\leq2\|x\|_{p_X}\text{ for every }x\in\ell^n_{p_X}.$$
Define a mapping $\varphi_n\colon H_n\to\ell^n_{p_X}$ by $(x_1,\dots,x_n)\mapsto(x_1,\dots,x_n)$. Then for $x=(x_1,\dots,x_n),y=(y_1,\dots,y_n)\in H_n$ we have
$$\|\varphi_n(x)-\varphi_n(y)\|_{p_X}=\left(\sum_{i=1}^n|x_i-y_i|^{p_X}\right)^\frac{1}{p_X}=\left(\sum_{i=1}^n|x_i-y_i|\right)^\frac{1}{p_X}=d_1(x,y)^\frac{1}{p_X},$$
where the second equality follows from the fact that $|x_i-y_i|\in\{0,1\}$ for every $i$. Let $f_n=S_n\circ\varphi_n\colon H_n\to X$. If $x,y\in H_n$, then
$$d_1(x,y)^\frac{r}{p_X}\leq\|f_n(x)-f_n(y)\|^r\leq2^rd_1(x,y)^\frac{r}{p_X}\leq2^rd_1(x,y),$$
where the last inequality holds since $d_1(x,y)$ is either zero or greater or equal to one and $\frac{r}{p_X}\leq1$ by Lemma \ref{p_X_and_r_X}.

Now, let $H$ be an infinite-dimensional Hilbert space. It follows from the work of Enflo \cite{en} (see also \cite[15.4.1 Theorem]{ma}) that $c_H(H_n,d_1)=\sqrt{n}=\diam(H_n,d_1)^\frac{1}{2}$ for every $n\in\en$. We apply Lemma \ref{austin_lemma} and obtain
$$\alpha_H(X,d_{\|.\|,r})\leq\frac{1-\frac{1}{2}}{\frac{r}{p_X}}=\frac{p_X}{2r}.$$
Hence $\alpha(X,d_{\|.\|,r})\leq\frac{p_X}{2r}$, and since $\alpha(X,d_{\|.\|,r})\leq1$, we have 
$\alpha(X,d_{\|.\|,r})\leq\min\left\{\frac{p_X}{2r},1\right\}$.
\end{proof}

Note that the above proof of the inequality $\alpha(X,d_{\|.\|,r})\leq\min\left\{\frac{p_X}{2r},1\right\}$ in Theorem \ref{main_theorem} does not use the assumption that the space $X$ coarsely embeds into a Hilbert space.

Let us conclude with several remarks.

\begin{rem}\label{upper_estimate_for_s_using_Enflo_type}
The inequality $s(X,d_{\|.\|,r})\leq\min\left\{\frac{p_X}{2r},1\right\}$ in Theorem \ref{main_theorem} can easily be proved using the notion of Enflo type.

Recall that a metric space $(M,d_M)$ has \emph{Enflo type} $p$, where $1\leq p<\infty$, if there is a constant $C>0$ such that for every $n\in\en$ and every $f\colon\{-1,1\}^n\to M$ we have
\begin{equation}\label{def_Enflo_type}
\mathbb{E}\,d_M(f(\varepsilon),f(-\varepsilon))^p\leq C^p\sum_{i=1}^n\mathbb{E}\,d_M(f(\varepsilon),f(\varepsilon_1,\dots,\varepsilon_{i-1},-\varepsilon_i,\varepsilon_{i+1},\dots,\varepsilon_n))^p,
\end{equation}
where $\mathbb{E}$ denotes the expectation with respect to a uniform choice of signs $\varepsilon=(\varepsilon_1,\dots,\varepsilon_n)\in\{-1,1\}^n$. We set 
$$\text{E-type}(M)=\sup\{1\leq p<\infty: M\text{ has Enflo type }p\}$$
(note that this is a supremum of a nonempty set since $M$ always has Enflo type 1 by the triangle inequality).

Now, let $X$ be a quasi-Banach space, $r\in M_X$ and $\|.\|$ be an equivalent $r$-norm on $X$. It is easy to prove that then 
$$\text{E-type}(X,d_{\|.\|,r})\leq\frac{p_X}{r}.$$
Suppose that $\alpha\in(0,1]$ is such that $(X,d_{\|.\|,r}^\alpha)$ admits a bi-Lipschitz embedding into a Hilbert space $H$. It is well known that $\text{E-type}(H)=2$ (this can be proved following the ideas from \cite{en}). Using \cite[Proposition 2.3]{ab} we obtain
$$\frac{\text{E-type}(X,d_{\|.\|,r})}{\alpha}\geq\text{E-type}(H)=2,$$
hence
$$\alpha\leq\frac{\text{E-type}(X,d_{\|.\|,r})}{2}\leq\frac{p_X}{2r}.$$
Therefore $s(X,d_{\|.\|,r})\leq\min\left\{\frac{p_X}{2r},1\right\}$.

Note that as in the proof of the inequality $\alpha(X,d_{\|.\|,r})\leq\min\left\{\frac{p_X}{2r},1\right\}$ in Theorem \ref{main_theorem} we did not use the assumption that the space $X$ coarsely embeds into a Hilbert space.
\end{rem}

\begin{rem}
The choice of the $\ell_1$ metric on the Hamming cubes $H_n$ in the proof of the inequality $\alpha(X,d_{\|.\|,r})\leq\min\left\{\frac{p_X}{2r},1\right\}$ in Theorem \ref{main_theorem} for $X$ infinite-dimensional was not essential. Given $r\in M_X$ and an equivalent $r$-norm $\|.\|$ on $X$, we can actually use the $\ell_p$ metric $d_p$ on $H_n$ for any $p\in[1,2)$ such that $p\leq\frac{p_X}{r}$ (note that $\frac{p_X}{r}\geq1$ by Lemma \ref{p_X_and_r_X} and that we do not need to consider the $\ell_p$ metrics for $0<p<1$ since they are all equal to the $\ell_1$ metric on $H_n$). Indeed, take such a $p$. Then $(H_n,d_p)$ is $1$-discrete and $\diam(H_n,d_p)=n^\frac{1}{p}$ for every $n\in\en$. Following the same lines as above, we construct for every $n\in\en$ a mapping $f_n\colon H_n\to X$ such that for all $x,y\in H_n$ we have
$$d_p(x,y)^\frac{pr}{p_X}\leq\|f_n(x)-f_n(y)\|^r\leq2^rd_p(x,y)^\frac{pr}{p_X}\leq2^rd_p(x,y),$$
where the last inequality holds since $d_p(x,y)$ is either zero or greater or equal to one and $\frac{pr}{p_X}\leq1$ by our assumption on $p$. If $H$ is an infinite-dimensional Hilbert space, then $c_H(H_n,d_p)=\diam(H_n,d_p)^{1-\frac{p}{2}}$ for every $n\in\en$ (this may be proved following the same lines as in \cite[15.4.1 Theorem]{ma}). Lemma \ref{austin_lemma} then yields
$$\alpha_H(X,d_{\|.\|,r})\leq\frac{1-(1-\frac{p}{2})}{\frac{pr}{p_X}}=\frac{p_X}{2r}$$
and we again conclude that $\alpha(X,d_{\|.\|,r})\leq\min\left\{\frac{p_X}{2r},1\right\}$.

Besides taking $p=1$, another natural choice would be to take $p=\max\{p_X,1\}$ if $p_X<2$. If $p_X=2$, then we have trivially $\alpha(X,d_{\|.\|,r})\leq1=\min\left\{\frac{p_X}{2r},1\right\}$.
\end{rem}

\begin{rem}
If $p_X>1$, we can give an alternative proof of the inequality $\alpha(X,d_{\|.\|,r})\leq\min\left\{\frac{p_X}{2r},1\right\}$ in Theorem \ref{main_theorem} by reducing it to the case of $\ell_p$-spaces, which is already known from \cite{ba}. Suppose that $X$ is an infinite-dimensional quasi-Banach space with $p_X>1$ which coarsely embeds into a Hilbert space. By \cite[Theorem 2.1(2)]{ka03}, $X$ is normable, so we can assume that $X$ is a Banach space. 

Let us first estimate $\alpha(X)$ (that is, the Hilbert space compression exponent of $X$ with respect to the canonical metric on $X$ given by the norm). It is easy to see that there is an infinite-dimensional separable closed subspace $Y$ of $X$ such that $p_Y=p_X$. Clearly, the space $Y$ coarsely embeds into a Hilbert space. By \cite[Theorem 1]{ra}, there is a probability space $(\Omega,\mathcal{B},\mu)$ such that $Y$ is linearly isomorphic to a subspace of $L_0(\mu)$. Since $p_Y>1$, \cite[Theorem 8.15]{bl} implies that $Y$ is isomorphic to a subspace of $L_1(\mu)$. Since $Y$ is separable, \cite[III.A.2]{wo} implies that there is a separable $L_1(\mu')$ for some nonnegative measure $\mu'$ such that $Y$ is isomorphic to a subspace of $L_1(\mu')$. It follows from the isomorphic classification of separable $L_1$-spaces \cite[III.A.1]{wo} that $Y$ is isomorphic to a subspace of $L_1(0,1)$. By a theorem of Guerre and Levy \cite[Th\'eor\`eme 1]{gl}, there is a subspace of $Y$ isomorphic to $\ell_{p_Y}$. Hence, by \eqref{s_and_alpha_little_lp}, $$\alpha(X)\leq\alpha(\ell_{p_Y})=\frac{p_Y}{2}=\frac{p_X}{2}.$$

Now, let $r\in M_X=(0,1]$ and let $\|.\|$ be an equivalent $r$-norm on $X$. It follows easily from the definition that $\alpha(X,d_{\|.\|,r})\leq\frac{1}{r}\alpha(X)$, and therefore $\alpha(X,d_{\|.\|,r})\leq\frac{1}{r}\frac{p_X}{2}$. Hence $\alpha(X,d_{\|.\|,r})\leq\min\left\{\frac{p_X}{2r},1\right\}$.
\end{rem}

\begin{rem} 
The proof of the inequality $\alpha(X,d_{\|.\|,r})\leq\min\left\{\frac{p_X}{2r},1\right\}$ in Theorem \ref{main_theorem} can be generalized to give an upper estimate for compression exponents of quasi-Banach spaces in general Banach spaces.
 
First, suppose that a metric space $(M,d_M)$ has Enflo type $p\in[1,\infty)$ with a constant $C>0$ (see Remark \ref{upper_estimate_for_s_using_Enflo_type} for the definition). Let $n\in\en$ and consider the $\ell_1$ metric $d_1$ on $\{-1,1\}^n$. Let $f\colon\{-1,1\}^n\to M$ be injective. Using the estimate
$$\frac{1}{\lip(f^{-1})}d_1(\varepsilon,\varepsilon')\leq d_M(f(\varepsilon),f(\varepsilon'))\leq\lip(f)d_1(\varepsilon,\varepsilon')\ \text{for all }\varepsilon,\varepsilon'\in\{-1,1\}^n,$$
we obtain easily from \eqref{def_Enflo_type} that 
$$\distortion(f)=\lip(f)\cdot\lip(f^{-1})\geq\frac{1}{C}n^{1-\frac{1}{p}}.$$ 
Hence (recall that $H_n=\{0,1\}^n$)
$$c_M(H_n,d_1)=c_M(\{-1,1\}^n,d_1)\geq\frac{1}{C}n^{1-\frac{1}{p}}.$$

Now, let $X$ be a quasi-Banach space, $r\in M_X$, $\|.\|$ be an equivalent $r$-norm on $X$, and let $Y$ be a Banach space. Let us show that then $\alpha_Y(X,d_{\|.\|,r})\leq\min\{\frac{p_X}{rp_Y},1\}$. If $X$ is finite-dimensional, then the statement is trivial. So suppose that $X$ is infinite-dimensional. If $p_Y=1$, then, since $r\leq p_X$ by Lemma \ref{p_X_and_r_X}, we have trivially $\alpha_Y(X,d_{\|.\|,r})\leq1=\min\{\frac{p_X}{rp_Y},1\}$. So suppose that $p_Y>1$. If $Y$ has type $p>1$, then, by a theorem of Pisier \cite[Theorem 7.5]{pi}, it has Enflo type $q$ for every $1\leq q<p$. So if $p\in(1,p_Y)$, then $Y$ has Enflo type $p$ (say with a constant $C$), and therefore $c_Y(H_n,d_1)\geq\frac{1}{C}n^{1-\frac{1}{p}}=\frac{1}{C}\diam(H_n,d_1)^{1-\frac{1}{p}}$ for every $n\in\en$. Using the same method as in the proof of Theorem \ref{main_theorem}, we obtain
$$\alpha_Y(X,d_{\|.\|,r})\leq\frac{1-(1-\frac{1}{p})}{\frac{r}{p_X}}=\frac{p_X}{rp}.$$
Hence $\alpha_Y(X,d_{\|.\|,r})\leq\min\{\frac{p_X}{rp_Y},1\}$.

To illustrate this result and its limitations, let $0<p<q<\infty$ and $q\geq1$. As mentioned in \eqref{s_lq_lp_and_alpha_lq_lp}, we then have $\alpha_{\ell_q}(\ell_p)=\frac{\max\{p,1\}}{q}$. Our result above gives the estimate 
$$\alpha_{\ell_q}(\ell_p)\leq\frac{\max\{\min\{p,2\},1\}}{\min\{q,2\}},$$ 
which is clearly an equality if in addition $q\leq2$, but not if $q>2$.
\end{rem}

\begin{acknow}
I would like to thank Jes\'{u}s Bastero for providing me with Kalton's unpublished paper \cite{ka77}.
\end{acknow}

\end{document}